\documentclass{amsart}

\usepackage{amssymb,amsthm,url,tikz}
\usetikzlibrary{arrows,shapes,positioning}

\newtheorem{theorem}{Theorem}[section]
\newtheorem{lemma}[theorem]{Lemma}
\newtheorem{corollary}[theorem]{Corollary}

\theoremstyle{definition}
\newtheorem{definition}[theorem]{Definition}

\frenchspacing

\parindent=16pt

\newcommand{\Ker}{\mathop{\textrm{Ker}}}

\newcommand{\Aut}{\mathop{\mathrm{Aut}}}

\begin{document}

\title[Intransitive graph-restrictive groups]{On intransitive graph-restrictive permutation groups}

\author[P. Spiga]{Pablo Spiga}
\address{Pablo Spiga, Departimento di Matematica Pura e Applicata,\newline
 University of Milano-Bicocca, Via Cozzi 53, 20126 Milano, Italy} 
\email{pablo.spiga@unimib.it}

\author[G. Verret]{Gabriel Verret}
\address{Gabriel Verret, Faculty of Mathematics, Natural Sciences and Info. Tech.,  \newline
University of Primorska, Glagolja\v{s}ka 8, 6000 Koper, Slovenia}
\email{gabriel.verret@fmf.uni-lj.si}

\thanks{Address correspondence to G. Verret (gabriel.verret@fmf.uni-lj.si)}

\subjclass[2010]{Primary 20B25; Secondary 05E18}

\keywords{vertex-transitive, graph-restrictive, semiregular}

\begin{abstract}
Let $\Gamma$ be a finite connected $G$-vertex-transitive graph and let $v$ be a vertex of $\Gamma$. If  the permutation group induced by the action of the vertex-stabiliser $G_v$ on the neighbourhood $\Gamma(v)$ is permutation isomorphic to $L$, then $(\Gamma,G)$ is said to be \emph{locally-$L$}. A permutation group $L$ is \emph{graph-restrictive} if there exists a constant $c(L)$ such that, for every locally-$L$ pair $(\Gamma,G)$ and a vertex $v$ of $\Gamma$, the inequality $|G_v|\leq c(L)$ holds. We show that an intransitive group is graph-restrictive if and only if it is semiregular.
\end{abstract}

\maketitle

\section{Introduction}
A graph $\Gamma$ is said to be $G$-\emph{vertex-transitive} if $G$ is a subgroup of $\Aut(\Gamma)$ acting transitively on the vertex-set of $\Gamma$. Let $\Gamma$ be a finite, connected, simple $G$-vertex-transitive graph and let $v$ be a vertex of $\Gamma$. If  the permutation group induced by the action of the vertex-stabiliser $G_v$ on the neighbourhood $\Gamma(v)$ is permutation isomorphic to $L$, then $(\Gamma,G)$ is said to be \emph{locally-$L$}. Note that, up to permutation isomorphism, $L$ does not depend on the choice of $v$, and,  moreover, the degree of $L$ is equal to the valency of $\Gamma$. In~\cite[page~$499$]{Verret}, the second author introduced the following definition.

\begin{definition}\label{def:Restrictive} 
A permutation group $L$ is \emph{graph-restrictive} if there exists a constant $c(L)$ such that, for every locally-$L$ pair $(\Gamma,G)$ and for every vertex $v$ of $\Gamma$, the inequality $|G_v|\leq c(L)$ holds.
\end{definition} 

To be precise, Definition~\ref{def:Restrictive} is a generalisation of the definition from~\cite{Verret}, where the group $L$ is assumed to be transitive. The problem of determining which transitive permutation groups are graph-restrictive was also proposed in~\cite{Verret}. A survey of the state of this problem can be found in~\cite{PSVRestrictive}, where it was conjectured (\cite[Conjecture~3]{PSVRestrictive}) that a transitive permutation group is graph-restrictive if and only if it is semiprimitive. (A permutation group is said to be \emph{semiregular} if each of its point-stabilisers is trivial and \emph{semiprimitive} if each of its normal subgroups is either transitive or semiregular.)

Having removed the requirement of transitivity from the definition of graph-restrictive, it is then natural to try to determine which intransitive permutation groups are graph-restrictive. The main result of this note is a complete solution to this problem (which we did not expect, given the abundance and relative lack of structure of intransitive groups).

\begin{theorem}\label{theo:main}
An intransitive and graph-restrictive permutation group is semiregular.
\end{theorem}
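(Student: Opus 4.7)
The plan is to prove the contrapositive: if $L$ is intransitive on $\Omega$ and not semiregular, then $L$ is not graph-restrictive, i.e., for every $n\geq 1$ there exists a locally-$L$ pair $(\Gamma,G)$ with $|G_v|\geq n$. Since $L$ is intransitive, write $\Omega=\Omega_1\sqcup\Omega_2$ with both parts nonempty and $L$-invariant; since $L$ is not semiregular, fix $\omega\in\Omega_1$ with $L_\omega\neq 1$. A useful first observation is that in any locally-$L$ pair, the arc-set of $\Gamma$ partitions into two $G$-orbits $A_1\sqcup A_2$ corresponding to $\Omega_1$ and $\Omega_2$, so that the resulting subgraphs $\Gamma^{(i)}=(V(\Gamma),E_i)$ are $G$-vertex-transitive with local action $L^{\Omega_i}$; one can therefore attempt to specify $\Gamma^{(1)}$ and $\Gamma^{(2)}$ nearly independently.

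For the construction I would start from some convenient base locally-$L$ pair $(\Gamma_0,G_0)$ and build, for each $n$, a new pair $(\Gamma_n,G_n)$ by taking $n$ ``sheets'' isomorphic to $\Gamma_0$ and rewiring the $\Omega_2$-type edges between sheets in a $\mathrm{Sym}(n)$-equivariant way. The aim is to arrange matters so that $G_n$ contains a copy of $\mathrm{Sym}(n)$ which fixes a chosen vertex $v$ and acts trivially on $\Gamma_n(v)$, giving $|(G_n)_v|\geq n!$. An arguably cleaner alternative is to work with the universal group $U(L)$ acting on the $|\Omega|$-regular tree: non-semiregularity of $L$ should ensure that the vertex stabiliser in $U(L)$ is an infinite profinite group, and passing to finite quotients of $U(L)$ by suitably chosen torsion-free cocompact subgroups should then produce finite locally-$L$ pairs with vertex stabilisers of arbitrary size.

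The main obstacle, in either realisation, is verifying simultaneously that (a) the local action at every vertex of $\Gamma_n$ is \emph{exactly} $L$ rather than a strictly larger subgroup of $\mathrm{Sym}(\Omega)$, (b) $\Gamma_n$ is connected, and (c) the putative $\mathrm{Sym}(n)$ really does fix $v$ and each of its neighbours. The hypothesis $L_\omega\neq 1$ is precisely what provides the slack needed here: the existence of nontrivial elements of $L$ fixing a point of $\Omega_1$ leaves room to absorb the $\mathrm{Sym}(n)$-contribution into the kernel of the action on $\Gamma_n(v)$ without changing the locally-induced group, which is the phenomenon that fails in the semiregular case and hence explains why semiregularity is the exact dividing line.
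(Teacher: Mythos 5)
Your overall strategy (prove the contrapositive, exploit a point $\omega$ with $L_\omega\neq 1$ to build locally-$L$ pairs with unbounded vertex stabilisers) is the right one and matches the paper's, but the proposal stops exactly where the real work begins: you yourself list the three verifications (exact local action $L$, connectedness, a large subgroup inside the kernel of the local action) as ``the main obstacle'' and then assert, rather than prove, that $L_\omega\neq 1$ ``provides the slack needed.'' Neither of your two candidate constructions is carried far enough to see that it works. For the ``sheets'' version, it is not explained why a sheet-permuting $\mathrm{Sym}(n)$ would fix a vertex $v$ at all (permuting sheets moves $v$ to its copies), nor why, after rewiring the $\Omega_2$-type edges, the induced local group is still $L$ rather than a larger subgroup of $\mathrm{Sym}(\Omega)$; the growth of vertex stabilisers in such situations is governed by iterating the point stabiliser $L_{\omega}$ along paths, and the group one can realistically place in the kernel is a subgroup of a direct power of $L_{\omega}$, not an arbitrary $\mathrm{Sym}(n)$ --- so even the shape of the claimed lower bound $n!$ is unsupported. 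For the $U(L)$ version, knowing that the vertex stabiliser of the universal group on the tree is infinite does not produce finite locally-$L$ pairs: quotienting the tree by a torsion-free cocompact lattice $\Lambda$ gives a finite graph, but to obtain a finite group acting on it with large vertex stabiliser you need $\Lambda$ to be normal of finite index in a \emph{discrete} cocompact group $D\leq\Aut(\mathcal{T}_0)$ whose own vertex stabiliser is already large and whose local action is exactly $L$ --- which is precisely the object that has to be constructed, so this route does not bypass the difficulty.

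The missing idea is the explicit mechanism the paper uses: amalgamate $A=L\times L_{\omega_1}^n$ with groups $B_i$ over edge groups $C_i=L_{\omega_i}\times L_{\omega_1}^n$, where the extra factor $L_{\omega_1}^n$ is engineered to be exactly the kernel of the action of $A$ on its neighbourhood (Lemma~\ref{Kernel} and Lemma~\ref{residuegroup}), while the coordinate-reversing involutions $b_1,b_2$ make $\langle b_1,b_2\rangle$ transitive on the $n+1$ coordinates of $L_{\omega_1}^{n+1}$, which is what forces the core of $A$ in the fundamental group $T$ to be trivial (Lemma~\ref{Gfaithful}) so that $T$ acts faithfully on the tree with vertex stabiliser of order $|L||L_{\omega_1}|^n$. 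One then still needs a finiteness step: $T$ is residually finite because it is the fundamental group of a finite graph of finite groups, and the finite-index normal subgroup $T_m$ must be chosen to avoid a specific finite set $X$ of double-coset representatives so that the quotient graph keeps valency $|\Omega|$ and the pair $(\Gamma_n,G_n)$ stays locally-$L$ (Lemma~\ref{final}). None of these steps is present or replaceable by the heuristics in your proposal, so as written there is a genuine gap rather than an alternative proof.
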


It is easily seen that a semiregular permutation group is graph-restrictive. Indeed, if $L$ is a semiregular permutation group of degree $d$ and $(\Gamma,G)$ is locally-$L$, then for every arc $vw$ of $\Gamma$ the group $G_{vw}$ fixes the neighbourhood $\Gamma(v)$ pointwise. Since $\Gamma$ is connected, it follows that $G_{vw}=1$ and hence $|G_v|\leq |\Gamma(v)|=d$ and $L$ is graph-restrictive. Thus Theorem~\ref{theo:main} provides a characterisation of intransitive graph-restrictive groups.

\begin{corollary}\label{cor:main}
An intransitive permutation group is graph-restrictive if and only if it is semiregular.
\end{corollary}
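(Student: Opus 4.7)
The plan is to combine two directions. The non-trivial direction --- that an intransitive graph-restrictive permutation group is semiregular --- is exactly Theorem~\ref{theo:main}, which I would invoke directly. It remains only to supply the converse, that every semiregular group is graph-restrictive, by carrying out the observation sketched immediately after Theorem~\ref{theo:main}.

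For the converse, suppose $L$ is a semiregular permutation group of degree $d$ and let $(\Gamma,G)$ be any locally-$L$ pair. For every arc $vw$ of $\Gamma$, the stabiliser $G_{vw}$ induces on $\Gamma(v)$ a subgroup of the point-stabiliser $L_w$, which is trivial since $L$ is semiregular. Hence $G_{vw}$ fixes $\Gamma(v)$ pointwise. Applying the same reasoning at each $u \in \Gamma(v)$ (with $v$ now in the role of $w$) shows that $G_{vw}$ fixes $\Gamma(u)$ pointwise; a straightforward induction on distance, using the connectedness of $\Gamma$, yields that $G_{vw}$ acts trivially on the entire vertex-set. Therefore $G_{vw}=1$, and the orbit-stabiliser formula applied to the orbit of $w$ under $G_v$ gives $|G_v|\leq|\Gamma(v)|=d$, so $L$ is graph-restrictive with constant $c(L)=d$.

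Combining the two directions establishes the equivalence. There is no serious obstacle, since all the substantive content --- the passage from graph-restrictivity to semiregularity --- is packaged in Theorem~\ref{theo:main}; the converse is an elementary propagation-along-edges argument of the kind already sketched in the paper.
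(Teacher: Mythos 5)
Your proposal is correct and matches the paper's own argument: the forward direction is an immediate application of Theorem~\ref{theo:main}, and the converse is exactly the connectedness/arc-stabiliser argument the paper sketches right after that theorem (with your induction on distance merely spelling out the paper's ``since $\Gamma$ is connected, $G_{vw}=1$'' step). No issues.
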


Note that an intransitive permutation group is semiregular if and only if it is semiprimitive. In particular, Corollary~\ref{cor:main} completely settles the intransitive version of~\cite[Conjecture~3]{PSVRestrictive}, giving remarkable new evidence towards its veracity.

\section{Proof of Theorem~\ref{theo:main}}
For the remainder of this paper, let $L$ be a permutation group on a finite set $\Omega$ which is neither transitive nor semiregular. We show that $L$ is not graph-restrictive, from which Theorem~\ref{theo:main} follows.

\subsection{The construction}\label{sub:idiot}
 Let $\omega_1,\ldots,\omega_k\in\Omega$ be a set of representatives of the orbits of $L$ on $\Omega$. Since $L$ is not transitive, $k\geq 2$ and, since $L$ is not semiregular, we may assume without loss of generality that $L_{\omega_1}\neq 1$. Let $n\geq 2$ be an integer and  let $b_1$ be the automorphism of $L_{\omega_1}\times L_{\omega_1}^n=L_{\omega_1}^{n+1}$ defined by
$$(x_0,x_1,\ldots,x_{n-1},x_n)^{b_1}=(x_n,x_{n-1},\ldots,x_1,x_0),$$
for each $(x_0,\ldots,x_{n})\in L_{\omega_1}^{n+1}$. Similarly, let $b_2$ be the automorphism of $L_{\omega_1}^{n}$ defined by 
$$(x_1,x_2,\ldots,x_{n-1},x_n)^{b_2}=(x_n,x_{n-1},\ldots,x_2,x_1),$$
for each $(x_1,\ldots,x_n)\in L_{\omega_1}^n$. Clearly, $b_1$ and $b_2$ are involutions, that is, $b_1^2=1$ and $b_2^2=1$. Now, let $\langle b_3\rangle,\ldots,\langle b_k\rangle$ be cyclic groups of order $2$ and consider the following abstract groups:  
\begin{eqnarray*}
A&:=& L\times L_{\omega_1}^n,\\
B_1&:=& \left(L_{\omega_1}\times L_{\omega_1}^n\right)\rtimes\langle b_1\rangle,\\
B_2&:=&L_{\omega_2}\times \left(L_{\omega_1}^n\rtimes\langle b_2\rangle\right),\\
B_i&:=&L_{\omega_i}\times L_{\omega_1}^n\times\langle b_i\rangle,\,\,\,\,\,\, \,\,\,\,\,\textrm{ for } i\in\{3,\ldots,k\},\\
C_i&:=&L_{\omega_i}\times L_{\omega_1}^{n},\,\,\,\,\,\,\,\,\,\,\,\,\,\,\,\,\,\,\,\,\,\,\,\,\,\,\,\, \textrm{ for } i\in\{1,\ldots,k\},\\
\end{eqnarray*}
where $b_1,\ldots,b_k\notin A$. For every $i\in\{1,\ldots, k\}$, there is an obvious embedding of $C_i$ in both $A$ and $B_i$. Hence, in what follows, we regard $C_i$ as a subgroup of both $A$ and $B_i$. Note that, for each $i\in \{1,\ldots,k\}$, we have $A\cap B_i=C_i$, $|B_i:C_i|=2$ and $|A:C_i|=|L:L_{\omega_i}|$.

\begin{figure}[!hhh]
  \begin{tikzpicture}[node distance =.5cm]
  \tikzset{myarrow/.style={==, thick}}
\node[circle,inner sep=0pt,label=90:$A$](A0){};
\node[below=of A0](A1){};
\node[below=of A1](A2){};
\node[below=of A2](A3){};
\node[below=of A3](A4){};
\node[below=of A4](A5){};
\node[right=of A4](B0){};
\node[right=of B0](B1){};
\node[right=of B1,circle,inner sep=0pt,label=-90:$C_1$](B2){};
\node[right=of B2](B3){};
\node[right=of B3](B4){};
\node[right=of B4,circle,inner sep=0pt,label=-90:$C_2$](B5){};
\node[right=of B5](B6){};
\node[right=of B6](B7){$\cdots$};
\node[right=of B7](B8){};
\node[right=of B8,circle,inner sep=0pt,label=-90:$C_k$](B9){};
\node[above=of B2](C0){};
\node[above=of C0](C1){};
\node[above=of C1](C2){};
\node[right=of C2,circle,inner sep=0pt,label=90:$B_1$](C3){};
\node[right=of C3](C4){};
\node[right=of C4](C5){};
\node[right=of C5,circle,inner sep=0pt,label=90:$B_2$](C6){};
\node[right=of C6](C7){};
\node[right=of C7](C8){};
\node[right=of C7](C8){$\cdots$};
\node[right=of C8](C9){};
\node[right=of C9](C10){};
\node[right=of C9,circle,inner sep=0pt,label=90:$B_k$](C11){};
\draw (A0) to (B2);
\draw(A0) to (B5);
\draw(A0) to (B9);
 \draw(B2) to node [auto,swap]{} node [below,swap] {$\,\,\,2$} (C3);
 \draw(B5) to node [auto,swap]{} node [below,swap] {$\,\,\,2$} (C6);
 \draw(B9) to node [auto,swap]{} node [below,swap] {$\,\,\,2$} (C11);
 \end{tikzpicture}
  \caption{}\label{fig1}
\end{figure}

\begin{lemma}\label{new}
The core of $C_1\cap \cdots\cap C_k$ in $A$ is $1\times L_{\omega_1}^n$.
\end{lemma}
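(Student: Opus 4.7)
The statement is a direct computation, and my plan is to verify it via the following four observations.

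First, I would make explicit how $C_i=L_{\omega_i}\times L_{\omega_1}^n$ sits inside $A=L\times L_{\omega_1}^n$: the obvious embedding sends the first factor $L_{\omega_i}$ into $L$ via inclusion and is the identity on the second factor. Consequently, as subgroups of $A$,
\[
  C_1\cap\cdots\cap C_k = \Bigl(\bigcap_{i=1}^k L_{\omega_i}\Bigr)\times L_{\omega_1}^n.
\]

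Second, I would exploit the direct product structure of $A$. The second factor $L_{\omega_1}^n$ is normal in $A$ (indeed, it is a whole direct factor), while conjugation by an element $(g,h)\in A$ acts on the first factor only through $g\in L$. Therefore, for any subgroup of the form $D\times L_{\omega_1}^n$ with $D\leq L$, its core in $A$ equals
\[
  \Bigl(\bigcap_{g\in L} D^{g}\Bigr)\times L_{\omega_1}^n,
\]
which is the core of $D$ in $L$, multiplied by $L_{\omega_1}^n$.

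Third, with $D=\bigcap_{i=1}^k L_{\omega_i}$, the core of $D$ in $L$ rearranges as
\[
  \bigcap_{g\in L}\bigcap_{i=1}^k L_{\omega_i}^{g}
  \;=\;\bigcap_{i=1}^k\bigcap_{g\in L} L_{\omega_i}^{g}.
\]
For each $i$, the conjugates of $L_{\omega_i}$ in $L$ are exactly the stabilisers of the points in the $L$-orbit of $\omega_i$, so $\bigcap_{g\in L}L_{\omega_i}^{g}$ is the kernel of the action of $L$ on $L\omega_i$. Intersecting over all $k$ orbits $L\omega_1,\ldots,L\omega_k$, which together cover $\Omega$, yields the kernel of the action of $L$ on $\Omega$; this is trivial because $L$ is by hypothesis a permutation group on $\Omega$.

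Combining these three steps gives the desired equality $1\times L_{\omega_1}^n$. There is no real obstacle: the only thing to be careful about is keeping track of where each $L_{\omega_i}$ lives inside $A$, and invoking faithfulness of $L$ on $\Omega$ at the final step to collapse the intersection of orbit-kernels to the identity.
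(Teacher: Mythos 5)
Your proof is correct and follows essentially the same route as the paper: reduce to computing the core of $L_{\omega_1}\cap\cdots\cap L_{\omega_k}$ in $L$ and observe that this is trivial because the $\omega_i$ represent all orbits and $L$ is faithful on $\Omega$. You merely spell out in more detail the steps the paper compresses into two lines.
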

\begin{proof}
Let $K$ be the core of $C_1\cap \cdots\cap C_k$ in $A$. Then
$$K=\bigcap_{a\in A}(C_1\cap \cdots\cap C_k)^a=\bigcap_{a\in A}\left((L_{\omega_1}\cap \cdots\cap L_{\omega_k})\times L_{\omega_1}^n\right)^a.$$
Recall that $L$ is a permutation group on $\Omega$ and that $\omega_1,\ldots,\omega_k$ are representatives of the orbits of $L$ on $\Omega$. We thus obtain that $L_{\omega_1}\cap \ldots \cap L_{\omega_k}$ is core-free in $L$ and hence $K=1\times L_{\omega_1}^n$.
\end{proof}
Let $T$ be the group given by generators and relators
$$T:=\langle A, B_1,\ldots, B_k\mid \mathcal{R}\rangle,$$
where $\mathcal{R}$ consists only of the relations in $A, B_1,\ldots, B_k$ together with the identification of $C_i$ in $A$ and $B_i$, for every $i\in\{1,\ldots,k\}$. We will obtain some basic properties of $T$ which can be deduced from any textbook on ``groups acting on graphs'', such as~\cite{DS,Wood,Serre}. 

We have adopted the notation and terminology of~\cite{DS} and will follow closely~\cite[I.4]{DS}. Using this terminology, the group $T$ is exactly the fundamental group of the graph of groups $Y$ shown in Figure~\ref{fig2}. The vertices of $Y$ are $A$, $B_1,\ldots,B_k$ and, for each $i\in\{1,\ldots,k\}$, there is a (directed) edge $C_i$ from $A$ to $B_i$.

\begin{figure}[!h]
  \begin{tikzpicture}[node distance =1.5 cm]
  \tikzset{myarrow/.style={==, thick}}
\tikzset{EdgeStyle/.append style={bend left}}
\node[circle,draw,inner sep=2pt,minimum width=1pt,label=-90:$A$](AA){};
\node[circle,draw,inner sep=2pt,minimum width=1pt,above=of AA,label=90:$B_2$](B2){};
\node[circle,draw,inner sep=2pt,minimum width=1pt,left=of B2,label=90:$B_1$](B1){};
\node[right=of B2](B){$\cdots$};
\node[circle,draw,inner sep=2pt,minimum width=1pt,right=of B,label=90:$B_k$](Bk){};
\draw(AA) to (Bk);
 \draw[->] (AA) to node [auto,swap]{} node [below,swap] {$\,\,\,C_k$} (Bk);
 \draw[->] (AA) to node [auto,swap]{} node [right,swap] {$C_2$} (B2);
 \draw[->] (AA) to node [auto,swap]{} node [below,swap] {$C_1\,\,\,$} (B1);
  \end{tikzpicture}
  \caption{}\label{fig2}
\end{figure}

It follows from~\cite[I.4.6]{DS} that the images of $A,B_1,\ldots,B_k,C_1,\ldots,C_k$ in $T$ are isomorphic to $A,B_1,\ldots,B_k,C_1,\ldots,C_k$, respectively. This allows us to identify $A,B_1,\ldots,B_k,C_1,\ldots,C_k$ with their isomorphic images in $T$ in what follows. In particular, for each $i\in \{1,\ldots,k\}$ we still have the equalities $A\cap B_i=C_i$, $|B_i:C_i|=2$ and $|A:C_i|=|L:L_{\omega_i}|$ in $T$. Let $\mathcal{T}$ be the graph  with vertex-set
$$V\mathcal{T}=T/A~\sqcup~T/B_1~\sqcup~\cdots~\sqcup~T/B_k,$$
(where $\sqcup$ denotes the disjoint union) and edge-set
$$E\mathcal{T}=\{\{Ax,B_ix\}\mid x\in T, i\in\{1\ldots,k\}\}.$$

\subsection{Results about the group $T$ and the graph $\mathcal{T}$}\label{sub:moron}
Clearly, the action of $T$ by right multiplication on $V\mathcal{T}$ induces a group of automorphisms of $\mathcal{T}$. Under this action, the group $T$ has exactly $k+1$ orbits on $V\mathcal{T}$, namely $T/A$, $T/B_1,\ldots, T/B_k$, and $k$ orbits on $E\mathcal{T}$ with representatives $\{A,B_1\},\ldots, \{A,B_k\}$. This induces a $(k+1)$-partition of the graph $\mathcal{T}$.

Observe that the set of neighbours of $A$ in $T/B_i$ is $\{B_ia\mid a\in A\}$. As $|A:(A\cap B_i)|=|A:C_i|=|L:L_{w_i}|$, we see that $A$ has $|L:L_{w_i}|$ neighbours in $T/B_i$. It follows that $A$ has valency $\sum_{i=1}^k|L:L_{\omega_i}|=|\Omega|$. A symmetric argument, with the roles of $A$ and $B_i$ reversed, shows that $B_i$ has valency $|B_i:C_i|=2$.  In particular, $\mathcal{T}$ is a $(2,|\Omega|)$-regular graph.

\begin{lemma}\label{Kernel}
The stabiliser of the vertex $A$ in $T$ is the subgroup $A$ and the kernel of the action on the neighbourhood of $A$ is $1\times L_{\omega_1}^n$.
\end{lemma}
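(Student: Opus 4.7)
The first assertion---that the stabiliser of the vertex $A$ in $T$ equals the subgroup $A$---I regard as essentially tautological given the way the action has been defined. The vertex labelled $A$ is the coset $A\cdot 1\in T/A$ and $T$ acts on $V\mathcal{T}$ by right multiplication, so its stabiliser is precisely $\{t\in T:At=A\}=A$. I would simply record this observation in one line and move on to the interesting part, which is the identification of the neighbourhood-kernel.

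My plan for the second claim is to describe the neighbours of $A$ explicitly and split them according to which of the orbits $T/B_1,\ldots,T/B_k$ they lie in. By the definition of $E\mathcal{T}$, the neighbours of $A$ in $T/B_i$ are exactly $\{B_ia:a\in A\}$. I would then observe that the map $B_ia\mapsto C_ia$ is well-defined and bijective because $A\cap B_i=C_i$, and moreover $A$-equivariant with respect to right multiplication. Hence the action of $A$ on the neighbours of $A$ in $T/B_i$ is permutation-isomorphic to the coset action of $A$ on the right coset space $A/C_i$, whose kernel is, by a standard computation, the core of $C_i$ in $A$.

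Taking intersections over $i\in\{1,\ldots,k\}$ then yields the kernel of the action on the full neighbourhood:
$$\bigcap_{i=1}^{k}\bigcap_{a\in A} C_i^{a}\;=\;\bigcap_{a\in A}\Big(\bigcap_{i=1}^{k}C_i\Big)^{a},$$
which is the core of $C_1\cap\cdots\cap C_k$ in $A$; here I am only using that conjugation distributes over intersection. An application of Lemma~\ref{new} then identifies this core with $1\times L_{\omega_1}^n$, which completes the proof.

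I do not expect any real obstacle. The conceptual content is entirely in the matching between the neighbours of $A$ in $T/B_i$ and the coset space $A/C_i$; once one sees this bijection, the statement reduces to the ``kernel of a coset action equals the core'' lemma and to Lemma~\ref{new}, which has already done the genuine group-theoretic work of extracting $1\times L_{\omega_1}^n$ from the core-freeness of $L_{\omega_1}\cap\cdots\cap L_{\omega_k}$ in $L$.
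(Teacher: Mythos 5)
Your proposal is correct and follows essentially the same route as the paper: both identify the neighbours of $A$ in $T/B_i$ with the coset space $A/C_i$, reduce the kernel computation to the core of $C_1\cap\cdots\cap C_k$ in $A$, and invoke Lemma~\ref{new}. The paper just performs the coset computation ($B_iax=B_ia$ iff $axa^{-1}\in A\cap B_i=C_i$) directly rather than phrasing it as a permutation isomorphism with the coset action.
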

\begin{proof}
The definition of $\mathcal{T}$ immediately gives that $A$ is the stabiliser in $T$ of the vertex $A$.  Moreover, the neighbourhood of $A$ is $\mathcal{T}(A)=\{B_ia\mid i\in \{1,\ldots,k\},a\in A\}$. Let $K$ be the kernel of the action of $A$ on $\mathcal{T}(A)$ and let $x\in K$. Clearly, $B_iax=B_ia$ if and only if $axa^{-1}\in B_i$, that is, $axa^{-1}\in A\cap B_i=C_i$.   It follows by Lemma~\ref{new} that  $K=1\times L_{\omega_1}^n$.
\end{proof}

One of the most important and fundamental properties of $\mathcal{T}$ is that it is a tree (see~\cite[I.4.4]{DS}). We now deduce some consequences from this pivotal result.

\begin{lemma}\label{tech}
For each $i\in \{1,\ldots,k\}$, we have $A\cap A^{b_i}=C_i$.
\end{lemma}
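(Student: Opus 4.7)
The plan is to prove the two inclusions $C_i \subseteq A \cap A^{b_i}$ and $A \cap A^{b_i} \subseteq C_i$ separately, where the first follows from elementary group-theoretic considerations and the second uses the tree structure of $\mathcal{T}$.

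First, I would establish the forward inclusion $C_i \subseteq A \cap A^{b_i}$. Since $|B_i : C_i| = 2$, the subgroup $C_i$ is normal in $B_i$; as $b_i \in B_i$, conjugation by $b_i$ preserves $C_i$, so $C_i^{b_i} = C_i \subseteq A$, which is exactly the statement that $C_i \subseteq A^{b_i}$. Combined with $C_i \subseteq A$, this gives the forward inclusion.

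For the reverse inclusion, I would switch to the action of $T$ on the tree $\mathcal{T}$. By Lemma~\ref{Kernel}, the stabiliser of the vertex $A \in V\mathcal{T}$ is exactly $A$; since $T$ acts on $V\mathcal{T}$ by right multiplication, a direct check shows that the stabiliser of the vertex $Ab_i$ is $A^{b_i}$. Therefore $A \cap A^{b_i}$ is the pointwise stabiliser in $T$ of the two-element set $\{A, Ab_i\} \subseteq V\mathcal{T}$. By construction $b_i \notin A$, so these two vertices are distinct. Moreover, $A$ is adjacent to $B_i$ (taking $x = 1$ in the definition of $E\mathcal{T}$), and since $b_i \in B_i$ we have $B_i b_i = B_i$, so the edge $\{Ab_i, B_i b_i\} = \{Ab_i, B_i\}$ witnesses that $Ab_i$ is also adjacent to $B_i$. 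Thus $A - B_i - Ab_i$ is a path of length $2$ in $\mathcal{T}$.

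Because $\mathcal{T}$ is a tree, this is the unique path from $A$ to $Ab_i$, so any element of $T$ that fixes both endpoints must fix every vertex of the path; in particular it must fix $B_i$. Hence $A \cap A^{b_i} \leq B_i$, and combining this with $A \cap A^{b_i} \leq A$ yields $A \cap A^{b_i} \leq A \cap B_i = C_i$, completing the proof. I do not anticipate any real obstacle here: the normality of $C_i$ in $B_i$ is immediate from index $2$, and the "fixing both endpoints of a path in a tree forces fixing the path" argument is the standard Bass--Serre input that the tree property of $\mathcal{T}$ was set up to provide.
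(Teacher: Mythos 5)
Your proof is correct and rests on the same key fact as the paper's --- that $\mathcal{T}$ is a tree, applied to the configuration $A$, $B_i$, $Ab_i$ --- so it is essentially the same argument. The only difference is one of packaging: the paper argues by contradiction, showing that an element of $(A\cap A^{b_i})\setminus C_i$ would yield a $4$-cycle $(A, B_i, Ab_i, B_iab_i, A)$ in $\mathcal{T}$, whereas you argue directly that $A\cap A^{b_i}$ stabilises the unique path from $A$ to $Ab_i$ through $B_i$ and hence lies in $A\cap B_i=C_i$.
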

\begin{proof}
We argue by contradiction and assume that $A\cap A^{b_i}\neq C_i$ for some $i\in\{1,\ldots,k\}$. As $|B_i:C_i|=2$, we see that $B_i$ normalises $C_i$ and hence $C_i<A\cap A^{b_i}$. In particular, there exist $a,a'\in A\setminus C_i$ with $a'=a^{b_i}=b_i^{-1}ab_i$. It follows that $A$, $B_i$, $Ab_i$ and $B_iab_i$ are distinct vertices of $\mathcal{T}$. Now, the definition of $\mathcal{T}$ shows that $(A,B_i,Ab_i,B_iab_i,Ab_i^{-1}ab_i=A)$ is a cycle of length $4$ in $\mathcal{T}$ (see Figure~\ref{fig3}). This contradicts the fact that $\mathcal{T}$ is a tree and concludes the proof. 
\begin{figure}[!h]
 \begin{tikzpicture}[node distance = 1.5cm]
  \tikzset{myarrow/.style={==, thick}}
  \tikzset{EdgeStyle/.append style={bend left}}
\node[circle,draw,inner sep=2pt,minimum width=2pt,label=-90:$Ab_i^{-1}ab_i{=}A\,\,\,\,\,\,\,\,\,\,$](A){};
   \node[circle,inner sep=2pt,minimum width=2pt,right=of A,draw,label=-90:$B_i$](B){};
   \node[circle,inner sep=2pt,minimum width=2pt,right=of B,draw,label=-90:$Ab_i$](C){};
   \node[circle,inner sep=2pt,minimum width=2pt,right=of C,draw,label=-90:$B_iab_i$](D){};
  \draw (A) to (B);
  \draw (B) to (C);
  \draw (C) to (D);
  \draw[EdgeStyle] (A) to node[above,swap]{} (D);
  \end{tikzpicture}
  \caption{}\label{fig3}
\end{figure}
\end{proof}

\begin{lemma}\label{Gfaithful}
The subgroup $A$ is core-free in $T$. In particular, the group $T$ acts faithfully on $T/A$.
\end{lemma}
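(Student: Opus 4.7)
The plan is to let $N$ denote the core of $A$ in $T$ and show $N=1$; since $N$ is precisely the kernel of the action of $T$ on $T/A$ by right multiplication, this will yield both assertions of the lemma. As $N$ is normal in $T$ and contained in $A$, we have $N=N^{b_i}\leq A^{b_i}$, so Lemma~\ref{tech} gives $N\leq A\cap A^{b_i}=C_i$ for every $i\in\{1,\dots,k\}$, hence $N\leq C_1\cap\cdots\cap C_k$. Since $N$ is normal in $A$, it lies in the core of $C_1\cap\cdots\cap C_k$ in $A$, which by Lemma~\ref{new} equals $1\times L_{\omega_1}^n$. So every element of $N$ has the form $(1;y_1,\dots,y_n)$ with trivial first component.

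The heart of the argument is to exploit normality once more, via the involutions $b_1$ and $b_2$. Identify $C_1$ with $L_{\omega_1}^{n+1}$, so that $N$ becomes a subgroup of $L_{\omega_1}^{n+1}$ in which every element has zeroth coordinate equal to $1$. Conjugation by $b_1$ normalises $C_1$ and, by the very definition of $B_1$, acts on $C_1$ by the full coordinate-reversal $\sigma_1=(0,n)(1,n-1)\cdots$. Conjugation by $b_2$ does not normalise $C_1$, but it does normalise $C_2=L_{\omega_2}\times L_{\omega_1}^n$, and since $N\leq C_1\cap C_2$ the $b_2$-action still preserves $N$; tracking an element $(1;y_1,\dots,y_n)\in N$ into $C_2$ (where $b_2$ fixes the $L_{\omega_2}$-component and reverses the $L_{\omega_1}^n$-component) and then back into $C_1$, the induced action on $N\subseteq L_{\omega_1}^{n+1}$ is the coordinate permutation $\sigma_2=(1,n)(2,n-1)\cdots$ that fixes $0$ and reverses $\{1,\dots,n\}$.

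A direct calculation gives $\sigma_1\sigma_2=(0,n,n-1,\dots,1)$, an $(n+1)$-cycle, so $\langle\sigma_1,\sigma_2\rangle$ is a dihedral group acting transitively on $\{0,1,\dots,n\}$. The subgroup $N\leq L_{\omega_1}^{n+1}$ is therefore closed under a transitive group of coordinate permutations and, by the previous step, every element of $N$ has zeroth coordinate $1$; transitivity then forces every coordinate of every element of $N$ to be $1$, so $N=1$, as desired. The only slightly delicate point is the translation of the $b_2$-conjugation into the coordinate permutation $\sigma_2$ on $L_{\omega_1}^{n+1}$: because $b_2$ does not normalise $C_1$, this description does not extend to all of $C_1$, but it is only required on $N\leq C_1\cap C_2$, where both the $C_1$- and $C_2$-pictures are simultaneously available, so no difficulty arises.
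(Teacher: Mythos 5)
Your proposal is correct and follows essentially the same route as the paper: reduce to $N\leq C_1\cap\cdots\cap C_k$ via Lemma~\ref{tech}, pass to $1\times L_{\omega_1}^n$ via Lemma~\ref{new}, and then use the fact that $\langle b_1,b_2\rangle$ induces a transitive permutation group on the $n+1$ coordinates of $L_{\omega_1}^{n+1}$ to force $N=1$. The paper simply asserts this transitivity ``by construction,'' whereas you verify it explicitly by computing that $\sigma_1\sigma_2$ is an $(n+1)$-cycle and carefully justify why conjugation by $b_2$ induces $\sigma_2$ on $N$; this is a more detailed write-up of the same argument, not a different one.
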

\begin{proof}
Let $N$ be the core of $A$ in $T$.  From Lemma~\ref{tech}, we obtain $N\leq C_1\cap\cdots\cap C_k$, and it follows from Lemma~\ref{new} that $N\leq 1\times L_{\omega_1}^n$. By construction, the group $\langle b_1,b_2\rangle$ induces a transitive permutation group on the $n+1$ coordinates of $L_{\omega_1}\times L_{\omega_1}^{n}$. As the first coordinate of the elements of $N$ is $1$ and as $N$ is invariant under $\langle b_1,b_2\rangle$, we see that every coordinate of $N$ must be equal to $1$, that is, $N=1$. The lemma now follows. 
\end{proof}

As every vertex of $\mathcal{T}$ not in $T/A$ has valency $2$, we see that $\mathcal{T}$ is the subdivision graph of a tree $\mathcal{T}_0$ with vertex set $T/A$ and valency $|\Omega|$. Clearly, $T$ acts transitively  and, in view of Lemma~\ref{Gfaithful}, faithfully on the vertices of $\mathcal{T}_0$. The tree $\mathcal{T}_0$ and the group $T$ are our main ingredients for the proof of Theorem~\ref{theo:main}. (The auxiliary graph $\mathcal{T}$ was  introduced mainly to make it more convenient to apply the results from~\cite{DS}.)

\begin{lemma}\label{residuegroup}
The stabiliser in $T$ of the vertex $A$ of $\mathcal{T}_0$ is the subgroup $A$ and the action induced by $A$ on its neighbourhood is permutation isomorphic to the action of $L$ on $\Omega$. 
\end{lemma}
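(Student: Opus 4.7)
The first assertion is immediate: $V\mathcal{T}_0 = T/A$ and $T$ acts by right multiplication, so the stabiliser of the vertex $A$ is exactly $A$. The substance of the lemma is the identification of the induced permutation action on $\mathcal{T}_0(A)$.

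The plan is to first obtain an explicit description of $\mathcal{T}_0(A)$ by exploiting that $\mathcal{T}$ is the subdivision of $\mathcal{T}_0$. Since every vertex of $\mathcal{T}$ outside $T/A$ has valency $2$, the neighbours of $A$ in $\mathcal{T}_0$ are the cosets $Ax\in T/A$ obtained by walking through some $\mathcal{T}$-neighbour $B_ia$ of $A$ to its other $\mathcal{T}$-neighbour. From the definition of $\mathcal{T}$, that other neighbour is the unique coset $Ax$ with $x\in B_ia\setminus C_ia$, which is $Ab_ia$. Hence
$$\mathcal{T}_0(A)=\{Ab_ia\mid i\in\{1,\ldots,k\},\ a\in A\}.$$

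Next, I would use Lemma~\ref{tech} to parameterise these cosets. The equality $Ab_ia=Ab_ia'$ is equivalent to $a(a')^{-1}\in A\cap A^{b_i}$, which by Lemma~\ref{tech} equals $C_i$. Thus for each $i$ the $B_i$-branch of $\mathcal{T}_0(A)$ is in $A$-equivariant bijection with $A/C_i$, and since $|A:C_i|=|L:L_{\omega_i}|$, summing over $i$ gives $|\mathcal{T}_0(A)|=\sum_{i=1}^k|L:L_{\omega_i}|=|\Omega|$, matching the degree.

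To pin down the action, let $\phi\colon A=L\times L_{\omega_1}^n\to L$ be the projection onto the first factor, and define the map $\sigma\colon\mathcal{T}_0(A)\to\Omega$ by $\sigma(Ab_ia)=\omega_i^{\phi(a)}$. The previous step shows that $\sigma$ is well defined and that its restriction to each $B_i$-branch is a bijection onto the $L$-orbit of $\omega_i$; since these orbits partition $\Omega$, $\sigma$ is a bijection. Equivariance is then a one-line check: $(Ab_ia)\cdot a'=Ab_i(aa')$ is sent to $\omega_i^{\phi(a)\phi(a')}=\sigma(Ab_ia)^{\phi(a')}$. Because $L$ acts faithfully on $\Omega$ and $\phi(A)=L$, the permutation group induced by $A$ on $\mathcal{T}_0(A)$ is, via $\sigma$, permutation isomorphic to $L$ acting on $\Omega$.

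The only non-routine ingredient is the identification of $\mathcal{T}_0(A)$ with the disjoint union of the coset spaces $A/C_i$, and this is precisely where Lemma~\ref{tech} (itself a consequence of $\mathcal{T}$ being a tree) does all the work; everything else is bookkeeping with the structural facts $C_i=L_{\omega_i}\times L_{\omega_1}^n$ and $A=L\times L_{\omega_1}^n$.
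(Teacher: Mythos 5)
Your proof is correct and follows essentially the same route as the paper: the same description of $\mathcal{T}_0(A)$ as $\{Ab_ia\}$, the same appeal to Lemma~\ref{tech} to show $Ab_ia=Ab_ia'$ iff $a(a')^{-1}\in C_i$, and the same bijection $Ab_ia\mapsto\omega_i^{\pi(a)}$ with the same equivariance check. The only (harmless) additions are your more explicit derivation of $\mathcal{T}_0(A)$ from the subdivision structure and the remark about faithfulness of $L$.
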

\begin{proof}
Let $\pi:A\to L$ be the natural projection onto the first coordinate. In other words, if $a=(a_0,a_1,\ldots,a_n)\in A$, with $a_0\in L$ and with $a_1,\ldots,a_n\in L_{\omega_1}$, then $\pi(a)=a_0$. Clearly, the kernel of $\pi$ is $1\times L_{\omega_1}^n$, which by Lemma~\ref{Kernel} is also the kernel of the action of $A$ on the neighbourhood of the vertex $A$. Denote by $\mathcal{T}_0(A)$ the neighbourhood of $A$ in $\mathcal{T}_0$. The definitions of $\mathcal{T}$ and $\mathcal{T}_0$ yield  $\mathcal{T}_0(A)=\{Ab_ia\mid i\in \{1,\ldots,k\},a\in A\}$. Let $\varphi:\mathcal{T}_0(A)\to \Omega$ be the mapping $\varphi:Ab_ia\mapsto \omega_i^{\pi(a)}$. We show that $\varphi$ is well-defined and injective. 

Indeed, $Ab_ia=Ab_ia'$ for some $a,a'\in A$ if and only if $Ab_ia(a')^{-1}b_i^{-1}=A$, that is, $a(a')^{-1}\in A\cap A^{b_i}$. By Lemma~\ref{tech}, $A\cap A^{b_i}=C_i$. Clearly, $a(a')^{-1}\in C_i$ if and only if $\pi(a(a')^{-1})\in L_{\omega_i}$, that is, $\omega_i^{\pi(a)}=\omega_i^{\pi(a')}$. This shows that $\varphi$ is well-defined and that it is a injective. 

Clearly, $\varphi$ is surjective and hence it is a bijection. For every $a,x\in A$ and for every $i\in \{1,\ldots,k\}$, we have $\varphi((Ab_ia)x)=(\varphi(Ab_ia))^{\pi(x)}$. As $\varphi$ is a bijection, this shows that the action of $A$ on $\mathcal{T}_0(A)$ is permutation isomorphic to the action of $L$ on $\Omega$. 
\end{proof}

Recall that a group $X$ is said to be \emph{residually finite} if there exists a family $\{X_m\}_{m\in \mathbb{N}}$ of normal subgroups of finite index in $X$ with $\bigcap_{m\in \mathbb{N}}X_m=1$.

\begin{lemma}\label{residuallyfinite}
The group $T$ is residually finite.
\end{lemma}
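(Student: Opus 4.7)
The plan is to prove that $T$ is virtually free, and then deduce residual finiteness by a standard argument involving characteristic subgroups. The paper has exhibited $T$ as the fundamental group of a finite graph of groups whose vertex groups ($A,B_1,\ldots,B_k$) and edge groups ($C_1,\ldots,C_k$) are all \emph{finite}, and the associated action of $T$ on the tree $\mathcal{T}$ has finite vertex-stabilisers and finite quotient graph. A classical result in Bass--Serre theory (see \cite{DS} or \cite{Serre}; equivalently, the Karrass--Pietrowski--Solitar theorem) then yields a free subgroup of finite index in $T$.

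Replacing this subgroup by its normal core in $T$ (which has finite index, since the original subgroup has only finitely many $T$-conjugates, and which is still free by the Nielsen--Schreier theorem), I may assume I have a \emph{normal} free subgroup $F\trianglelefteq T$ of finite index; note that $F$ is automatically of finite rank since $T$ is finitely generated. For each positive integer $n$, let $K_n$ denote the intersection of all subgroups of $F$ of index at most $n$. By Marshall Hall's theorem a finitely generated group has only finitely many subgroups of any given finite index, so $K_n$ has finite index in $F$; and since $K_n$ is defined intrinsically in terms of $F$, it is characteristic in $F$, and therefore normal in $T$. Because free groups of finite rank are residually finite, $\bigcap_n K_n = 1$.

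Given any $t\in T\setminus\{1\}$, a normal subgroup of $T$ of finite index avoiding $t$ is now provided by $F$ itself (if $t\notin F$), or else by $K_n$ for any $n$ large enough to ensure $t\notin K_n$ (if $t\in F$); this shows that $T$ is residually finite. The only non-trivial step is the Bass--Serre input that a fundamental group of a finite graph of finite groups is virtually free, but given how much of the Bass--Serre machinery the paper has already imported from \cite{DS}, this is essentially a citation at this point; the rest is routine bookkeeping about characteristic subgroups of free groups of finite rank.
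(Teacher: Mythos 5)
Your proof is correct and follows essentially the same route as the paper: both establish that $T$ has a free normal subgroup of finite index (the paper derives this from~\cite[I.4.7]{DS} together with the freeness of groups acting freely on trees, rather than quoting virtual freeness as a packaged theorem), invoke residual finiteness of free groups, and then upgrade the resulting finite-index subgroups to normal subgroups of $T$. The only cosmetic difference is that you use characteristic subgroups of $F$ (via M.~Hall's finiteness theorem) where the paper simply takes the core in $T$ of each member of a witnessing family.
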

\begin{proof}
As the groups $A$, $B_1,\ldots,B_k$ are finite, it follows from~\cite[I.4.7]{DS} that there exists a finite group $F$ and a group homomorphism $\pi:T\to F$ with $\Ker\pi\cap A=1$ and $\Ker\pi\cap B_i =1$ for each $i\in \{1,\ldots,k\}$. Write $K=\Ker\pi$. Since $F$ is finite, we have $|T:K|<\infty$. 

Since $K\unlhd T$, $K\cap A=1$ and $K\cap B_i=1$, it follows that the only element of $K$ fixing a vertex of $\mathcal{T}$ is $1$ and hence, by~\cite[I.5.4]{DS}, $K$ is a free group. In particular, $K$ is residually finite (see~\cite[6.1.9]{Robinson} for example). It follows that there exists a family $\{K_m\}_{m\in\mathbb{N}}$ of normal subgroups of finite index in $K$ with $\bigcap_{m\in \mathbb{N}}K_m=1$. 

Let $T_m$ be the core of $K_m$ in $T$. As $|T:K_m|=|T:K||K:K_m|<\infty$, we see that $|T:T_m|<\infty$. Moreover, since $T_m\leq K_m$, we have $\bigcap_{m\in \mathbb{N}}T_m=1$ and the lemma follows. 
\end{proof}

\subsection{Proof of Theorem~\ref{theo:main}. }

We now recall the definition of a normal quotient of a graph. Let $\Gamma$ be a $G$-vertex-transitive graph and let $N$ be a normal subgroup of $G$. Let $v^N$ denote the $N$-orbit containing $v\in V\Gamma$. Then the \textit{normal quotient} $\Gamma/N$  is the graph whose vertices are the $N$-orbits on $V\Gamma$, with an edge between distinct vertices $v^N$ and $w^N$ if and only if there is an edge $\{v',w'\}$ of $\Gamma$ for some $v'\in v^N$ and some $w'\in w^N$. Observe that the group $G/N$ acts transitively on the graph $\Gamma/N$.

\begin{lemma}\label{final}There exists a locally-$L$ pair $(\Gamma_n,G_n)$ such that the stabiliser of a vertex of $\Gamma_n$ in $G_n$ has order $|L||L_{\omega_1}|^n$.
\end{lemma}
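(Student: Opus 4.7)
The plan is to realise $(\Gamma_n,G_n)$ as a finite normal quotient of the infinite pair $(\mathcal{T}_0,T)$. By Lemma~\ref{residuegroup}, $(\mathcal{T}_0,T)$ is already locally-$L$ with vertex-stabiliser $A$ of the required order $|L|\,|L_{\omega_1}|^n$, so the task reduces to finding a finite-index normal subgroup $N\unlhd T$ such that the natural covering $\mathcal{T}_0\to\mathcal{T}_0/N$ is bijective on every neighbourhood. I will then set $\Gamma_n:=\mathcal{T}_0/N$ and $G_n:=T/N$.

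I need $N$ to satisfy two properties. First, $N\cap A=\{1\}$; by normality and vertex-transitivity of $T$ on $V\mathcal{T}_0$, this forces $N$ to act semiregularly on vertices, so that $(G_n)_{A^N}=AN/N\cong A$ has order $|L|\,|L_{\omega_1}|^n$. Second, no element of $N\setminus\{1\}$ should map $A$ to a vertex at distance at most $2$ from $A$ in $\mathcal{T}_0$: distance $1$ would create a loop at $A^N$, while distance $2$ via some common neighbour would force two distinct neighbours of $A$ into a single $N$-orbit, collapsing the neighbourhood of $A^N$. Both conditions are captured by requiring $N\cap F=\emptyset$, where
\[
F:=\{\,t\in T\setminus\{1\}\mid \mathrm{dist}_{\mathcal{T}_0}(A,A^t)\leq 2\,\}.
\]
Crucially, $F$ is finite: the tree $\mathcal{T}_0$ has valency $|\Omega|$, so its closed ball of radius $2$ around $A$ is finite, and for each vertex $v$ in that ball the set $\{t\in T\mid A^t=v\}$ is a coset of the finite group $A$. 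Lemma~\ref{residuallyfinite} then supplies the required $N$: for each of the finitely many $t\in F$, pick a normal subgroup of finite index in $T$ avoiding $t$, and intersect these finitely many subgroups.

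With such an $N$ in hand, the rest is formal. The graph $\Gamma_n$ is finite, connected (being a quotient of a tree), and simple (no loops or identified neighbours, by the choice of $F$), and the stabiliser $(G_n)_{A^N}=AN/N$ is isomorphic to $A$. The natural map $\mathcal{T}_0(A)\to\Gamma_n(A^N)$, $u\mapsto u^N$, is surjective by construction and injective by the distance-$2$ condition; being equivariant with respect to $A\to AN/N$, it transports the $A$-action on $\mathcal{T}_0(A)$ identified with $L$ on $\Omega$ via Lemma~\ref{residuegroup} to the local action of $G_n$ at $A^N$. Hence $(\Gamma_n,G_n)$ is locally-$L$ with stabiliser of order $|L|\,|L_{\omega_1}|^n$. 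I expect the main subtlety to be the distance-$2$ ingredient in $F$: requiring only $N\cap A=\{1\}$ would leave room for the quotient's neighbourhood of $A^N$ to be strictly smaller than $|\Omega|$, and hence for its local action to be a proper quotient of $L$ rather than $L$ itself.
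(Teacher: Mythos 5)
Your proposal is correct and follows essentially the same route as the paper: both pass to a quotient of $\mathcal{T}_0$ by a finite-index normal subgroup, supplied by Lemma~\ref{residuallyfinite}, chosen to avoid a finite set of ``bad'' elements so that the valency, the vertex-stabiliser and the local action all survive. Your set $F$ is a geometric reformulation of the paper's algebraically defined $X=\{a_1b_i^{-1}a_2b_ja_3\}$ (elements moving the vertex $A$ within distance $2$), and your explicit inclusion of the distance-$1$ case even rules out the loop degeneracy, which the paper's argument passes over silently.
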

\begin{proof}
By Lemma~\ref{residuallyfinite}, $T$ is residually finite and hence there exists a family $\{T_m\}_{m\in\mathbb{N}}$ of normal subgroups of finite index in $T$ with $\bigcap_{m\in \mathbb{N}}T_m=1$. Consider the set $$X=\{a_1b_i^{-1}a_2b_ja_3\mid a_1,a_2,a_3\in A,\, i,j\in \{1,\ldots,k\}\}.$$ Observe that since $A$ is finite, so is $X$. In particular, as $\bigcap_{m\in \mathbb{N}}T_m=1$ and $1\in X$, there exists $m\in\mathbb{N}$ with $X\cap T_{m}=1$. Let $G_n=T/T_{m}$ and $\Gamma_n=\mathcal{T}_0/T_m$. As $|T:T_m|<\infty$, the group $G_n$ and the graph $\Gamma_n$ are finite. Note that $\Gamma_n$ is connected and $G_n$-vertex-transitive. We first show that $\Gamma_n$ has valency $|\Omega|$. 

We argue by contradiction and suppose that $\Gamma_n$ has valency less than $|\Omega|$. It follows from the definition of normal quotient that the vertex $A$ of $\mathcal{T}_0$ must have two distinct neighbours in the same $T_m$-orbit. Recall that the neighbourhood of $A$ in $\mathcal{T}_0$ is $\{Ab_ia\mid i\in \{1,\ldots,k\},a\in A\}$. In particular, $Ab_ia\neq Ab_{j}a'$ and $Ab_ian=Ab_{j}a'$, for some $i,j\in \{1,\ldots,k\}$, $a,a'\in A$ and $n\in T_m$. It follows that $n\in a^{-1}b_i^{-1}Ab_ja'\subseteq X$ and hence $n\ \in X\cap T_{m}=1$, which is a contradiction.

Let $K$ be the kernel of the action of $G_n$ on $V\Gamma_n$. Since the valency of $\Gamma_n$ equals the valency of $\mathcal{T}_0$, we have that $\Gamma_n$ is a regular cover of $\mathcal{T}_0$. Since $\Gamma_n$ is connected, it follows that $K$ acts semiregularly on $V\Gamma_n$ and hence $K=T_m$. By Lemma~\ref{residuegroup}, $(\mathcal{T}_0,T)$ is locally-$L$ and hence so is $(\Gamma_n,G_n)$. Finally, the stabiliser of the vertex $AT_m$ of $\Gamma_n$ is $AT_m/T_m\cong A/(A\cap T_m)\cong A$, which has order $|A|=|L||L_{\omega_1}|^n$. 
\end{proof}

\begin{proof}[Proof of Theorem~\ref{theo:main}]
By Lemma~\ref{final}, for every natural integer $n\geq 2$, there exists a locally-$L$ pair $(\Gamma_n,G_n)$ with $|(G_n)_v|=|L||L_{\omega_1}|^n$, for $v\in V\Gamma_n$. As $|L_{\omega_1}|>1$, this shows that $L$ is not graph-restrictive.
\end{proof}

\thebibliography{99}

\bibitem{DS}D.~Goldschmidt, Graphs and Groups, in A.~Delgado, D.~Goldschmidt, B.~Stellmacher, \textit{Groups and graphs: new results and methods}, DMV Seminar~6, Birkh\"{a}user Verlag, Basel, 1985.

\bibitem{Wood}W.~Dicks, M.~J.~Dunwoody, \textit{Groups actings on graphs}, Cambridge studies in advanced mathematics \textbf{17}, Cambridge University Press, Cambridge, 1989.

\bibitem{PSVRestrictive} P.~Poto\v{c}nik, P.~Spiga, G.~Verret, On graph-restrictive permutation groups, \textit{J. Comb. Theory, Ser. B} \textbf{102} (2012), 820--831.

\bibitem{Robinson} D.~J.~S.~Robinson, \textit{A Course in the Theory of Groups}, Springer-Verlag, New York, 1980.

\bibitem{Serre}J.~P.~Serre, \textit{Trees}, Springer-Verlag, New York, 1980.

\bibitem{Verret} G.~Verret, On the order of arc-stabilizers in arc-transitive graphs, \textit{Bull.\ Aust.\ Math.\ Soc.} \textbf{80}  (2009),  498--505.

\end{document}